\newtheorem{theorem}{Theorem}[section]
\newtheorem{lemma}[theorem]{Lemma}
\newtheorem{corollary}[theorem]{Corollary}
\theoremstyle{definition}
\newtheorem{definition}[theorem]{Definition}
\newtheorem{example}[theorem]{Example}
\theoremstyle{remark}
\newtheorem{remark}[theorem]{Remark}
\numberwithin{equation}{section}
\newcommand{\ZZ}{\mathbb{Z}}
\newcommand{\NN}{\mathbb{N}}
\newcommand{\RR}{\mathbb{R}}
\newcommand{\C}{\mathbb{C}}
\newcommand{\rationals}{\mathbb{Q}}
\newcommand{\calS}{\mathcal{S}}
\newcommand{\calR}{\mathcal{R}}
\newcommand{\tensor}{\otimes}
\DeclareMathOperator{\scal}{scal}
\DeclareMathOperator{\Length}{Length}
\DeclareMathOperator{\Area}{Area}
\DeclareMathOperator{\Spin}{Spin}
\DeclareMathOperator{\ind}{ind}
\DeclareMathOperator{\Ker}{Ker}
\DeclareMathOperator{\End}{End}    
\begin{document}

\title{Enlargeable metrics on nonspin manifolds\protect}


\author{Simone Cecchini}
\address{Mathematisches Institut,
Georg-August-Universit\"at, 
G{\"o}ttingen,
Germany}
\email{cecchini@mathematik.uni-goettingen.de}

\author{Thomas Schick}
\address{Mathematisches Institut,
Georg-August-Universit\"at, 
G{\"o}ttingen,
Germany}
\email{thomas.schick@math.uni-goettingen.de}
\thanks{Both authors thank the DFG SPP 2026 for support}

\subjclass[2020]{primary: 53C23; secondary: 49Q05}

\date{}


\commby{Jiaping Wang}

\begin{abstract}
We show that an enlargeable Riemannian metric on a (possibly nonspin) manifold cannot have uniformly positive scalar curvature.
This extends a well-known result of Gromov and Lawson to the nonspin setting.
We also prove that every noncompact manifold admits a nonenlargeable metric.
In proving the first result, we use the main result of the recent paper by
Schoen and Yau on minimal hypersurfaces to obstruct positive
scalar curvature in arbitrary dimensions.
More concretely, we use this  to study nonzero degree maps $f\colon X^n\rightarrow S^k\times T^{n-k}$, with $k=1,2,3$.
When $X$ is a closed oriented manifold endowed with a metric $g$ of positive
scalar curvature and the map $f$ is (possibly area) contracting, we prove
inequalities relating the lower bound of the scalar curvature of $g$ and the
contracting factor of the map $f$.
\end{abstract}

\maketitle

\section{Introduction}
It has been an important topic in differential geometry in recent decades to construct obstructions to the existence of metrics of positive scalar curvature on a smooth manifold.
There are two main methods for this.
The first method is due to Lichnerowitz~\cite{Li63} and Atiyah-Singer~\cite{ASIII} and makes use of the index theory of the spin Dirac operator.
The main restriction of this method is that it applies only to spin manifolds, or at least to manifolds with a spin cover. 
The second method is due to Schoen and Yau~\cite{SY79} and is based on the spectral properties of the conformal Laplacian on a stable minimal hypersurfaces of a closed oriented manifold $M$.
It implies that if $M$ carries a metric of positive scalar curvature, and if $3\leq \dim M\leq 8$, then every nonzero homology class $\alpha\in H_{n-1}(M;\ZZ)$ is represented by a smooth embedded hypersurface $N\subset M$ carrying a metric of positive scalar curvature.
The main limitation of this technique is that it requires the dimension of $M$
to be at most $7$ in its original incarnation, and $8$ by the observation made
in \cite{JoachimSchick}, using \cite{Smale}.
In a recent work~\cite{SY17}, Schoen and Yau were able to remove this dimensional restriction at least in certain situations.
This suggests to ask whether obstructions constructed by using the spin Dirac
operator can be extended to the nonspin case by using their minimal
$k$-slicing technique.

In this paper we focus on the notion of enlargeability, that in the spin case
has been proved to be an obstruction to positive scalar curvature by Gromov
and Lawson~\cite{GL83}, compare also \cite{HS1,HS2}.
We start with recalling some definitions.
Let $f\colon (X,g)\rightarrow (Y,g_Y)$ be a differentiable map between smooth Riemannian manifolds.
We say that $f$ is \emph{$k$-dimensionally $\epsilon$-contracting} if for each $x\in X$ 
\[
	\|f_\ast(v_1)\wedge\cdots\wedge f_\ast(v_k)\|_{f(x)}\ \leq\ \epsilon\, \|v_1\wedge\cdots\wedge v_k\|_x\,,
	\qquad\qquad \forall v_1,\ldots,v_k\in T_xX\,.
\]
When $k=1$, we say that $f$ is \emph{$\epsilon$-contracting}.
When $k=2$, $f$ is called \emph{area $\epsilon$-contracting}.
Notice that an $\epsilon$-contracting map is area $\epsilon^2$-contracting.
Notice also that an $\epsilon$-contracting map contracts lengths by an $\epsilon$-factor and that an area $\epsilon$-contracting map contracts areas by an $\epsilon$-factor.


\begin{definition}
Let $M$ be a smooth oriented connected manifold of dimension $n$ and let $k\in\NN$. 
We say that a Riemannian metric $g$ on $M$ is enlargeable in dimension $k$ if
for every $\epsilon>0$ there exist a connected covering $\bar M\rightarrow M$
and a map $f\colon (\bar M,\bar g)\rightarrow (S^n,ds_n^2)$ such that 
\begin{itemize}
  \item $f$ is compactly supported, i.e.~there is a compact subset $K\subset
     \bar M$ such that $f|_{\bar M\setminus K}$ is constant;
  \item $f$ has nonzero degree (making sense as $f$ is compactly supported);
  \item $f$ is $k$-dimensionally $\epsilon$-contracting.
\end{itemize}
Here, $\bar g$ is the lift of the metric $g$ to $\bar M$ and $(S^n,ds_n^2)$ is the $n$-dimensional sphere endowed with the canonical round metric.
The manifold $M$ is called enlargeable in dimension $k$ if every Riemannian metric $g$ on $M$ (complete or not) is enlargeable in dimension $k$.
If $k=2$ the concept is called ``area-enlargeability'', if $k=1$ we simply talk of ``enlargeability''. 
\end{definition}


\begin{remark}
Our definition of enlargeability is the one that is adopted
in~\cite{LM89,HS1,HS2}. 
In contrast, in~\cite{GL80} and~\cite{GL83} it is required that the covers $\bar M$ admit a spin structure.
\end{remark}


\begin{remark}
Any enlargeable manifold is also area-enlargeable, but the converse is not
necessarily  true: see Remark~\ref{R:area-enlargeable but not enlargeable}.
\end{remark}


\begin{remark}
There are many examples of closed enlargeable manifolds.
For instance, the $n$-dimensional torus, compact ``solvmanifolds" and closed manifolds of nonpositive sectional curvature are all enlargeable.
Note that, in this case, it suffices to check the conditions for a single metric, as all metrics are equivalent.
The notion of area-enlargeability is particulary interesting in the noncompact case.
For example, if $M$ is a closed enlargeable manifold, then $M\times \RR$ is area-enlargeable.
For a comprehensive discussion, we refer the reader to~\cite{GL80} and~\cite{GL83}.
\end{remark}


Gromov and Lawson proved by using the spin Dirac operator technique that area-enlargeability (and \emph{a fortiori} enlargeability) is an obstruction for a metric to have uniformly positive scalar curvature.


\begin{theorem}[Gromov-Lawson]\label{T:GL1}
Let $X$ be a spin manifold without boundary.
A complete area-enlargeable metric $g$ on $X$ cannot have uniformly positive scalar curvature.
\end{theorem}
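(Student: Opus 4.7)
The plan is to argue by contradiction, using the twisted spin Dirac operator together with the Gromov-Lawson relative index theorem. Assume $X$ is spin, $g$ is complete and area-enlargeable, and $\scal_g \geq \kappa > 0$. For simplicity suppose $n := \dim X$ is even; the odd case is handled by a standard suspension trick.

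Fix a small constant $\epsilon > 0$, to be chosen later. Area-enlargeability provides a connected covering $\bar X \to X$ and a smooth, compactly supported, area $\epsilon$-contracting map $f \colon (\bar X, \bar g) \to (S^n, ds_n^2)$ of nonzero degree, constant outside some compact set $K \subset \bar X$. The spin structure lifts to $\bar X$, and the lower bound $\scal_{\bar g} \geq \kappa$ remains in force. On $S^n$ I pick a complex vector bundle $E_0$ of rank $r$ (for instance a Bott generator) whose Chern character $\ch(E_0 - \C^r)$ has nontrivial top-degree component, with lower-degree components arranged to vanish. Set $E := f^* E_0$, which is canonically trivialized as $\C^r$ off $K$.

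Let $D$ denote the spin Dirac operator on $\bar X$ and $D_E$ its twist by $E$. The Lichnerowicz-Weitzenb\"ock formula reads
\[
  D_E^2 \ =\ \nabla^* \nabla \ +\ \tfrac{1}{4}\scal_{\bar g} \ +\ \calR^E,
\]
where the zero-order term $\calR^E$ is built pointwise from the pullback $f^* F^{E_0}$ of the curvature $2$-form of $E_0$. Since $f$ is area $\epsilon$-contracting and $\|F^{E_0}\|$ is bounded on $S^n$, one obtains $\|\calR^E\|_{\infty} \leq C \epsilon$ with $C$ depending only on $E_0$. Choosing $\epsilon$ so small that $C\epsilon < \kappa/8$ yields the uniform estimate $D_E^2 \geq \kappa/8$ (and similarly $D^2 \geq \kappa/4$). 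Completeness of $\bar g$ makes both operators essentially self-adjoint on $L^2$, and the uniform positivity of their squares forces trivial kernels; hence both $L^2$-indices vanish.

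To obtain a contradiction, I invoke the Gromov-Lawson relative index theorem. Since $E$ agrees with $\C^r$ as a bundle with connection outside $K$ (after a harmless interpolation of connections), $D_E$ and $D^{\oplus r}$ coincide at infinity, their relative index is well-defined, and it is computed topologically:
\[
  \ind_{\mathrm{rel}}(D_E, D^{\oplus r}) \ =\ \int_{\bar X} \hat A(\bar X)\,\ch(E - \C^r) \ =\ \deg(f)\cdot\int_{S^n}\ch(E_0 - \C^r) \ \neq\ 0,
\]
where the second equality uses that only the top-degree part of $\ch(E_0 - \C^r)$ contributes (as $\hat A$ begins with $1$). But the analytic step forced $\ind D_E = \ind D^{\oplus r} = 0$, so the relative index should vanish, a contradiction. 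The main obstacle is the clean application of the relative index theorem in the complete noncompact setting: one must verify that both operators possess a well-defined $L^2$-index (guaranteed here by uniform positivity of the scalar curvature at infinity) and that the compactly supported virtual bundle $E - \C^r$ yields the stated topological formula. These are precisely the contents of Gromov-Lawson's classical framework, which applies cleanly once area-enlargeability has supplied the map $f$ with sufficiently small area-contraction.
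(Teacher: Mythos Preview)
The paper does not supply its own proof of Theorem~\ref{T:GL1}; it is quoted as a background result of Gromov and Lawson, with a reference to~\cite{GL83}, and the paper's new contributions (Theorems~A--C) proceed by entirely different, minimal-hypersurface methods in order to remove the spin hypothesis. So there is no ``paper's proof'' to compare against here.

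That said, your outline is a correct sketch of the classical Gromov--Lawson argument: twist the spin Dirac operator by the pullback of a bundle on $S^n$ with nontrivial top Chern class, use the area-contraction bound to make the curvature term $\calR^E$ dominated by the scalar curvature term in the Lichnerowicz formula so that $D_E^2$ is uniformly positive, and then invoke the relative index theorem on the complete noncompact cover to force a contradiction with the nonvanishing topological side. The technical caveats you flag---essential self-adjointness from completeness, Fredholmness from uniform positivity at infinity, and the compactly supported virtual bundle formula---are exactly the inputs supplied by~\cite{GL83}. Incidentally, the paper does deploy a closely related twisted-Dirac computation in its proof of Theorem~\ref{T:codimension-3}, but there on a compact manifold, so the ordinary Atiyah--Singer index theorem suffices and the relative-index machinery is not needed.
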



\noindent
This theorem directly implies the following consequence.


\begin{theorem}[Gromov-Lawson]\label{T:GL2}
A closed enlargeable spin manifold cannot carry any metric of positive scalar curvature.
\end{theorem}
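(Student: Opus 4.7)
\textbf{Proof proposal for Theorem \ref{T:GL2}.} The statement is advertised as a direct consequence of Theorem~\ref{T:GL1}, so my plan is to derive it in one shot by contradiction, taking advantage of three features packaged into the hypothesis ``closed enlargeable spin''.

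Suppose for contradiction that $M$ is a closed enlargeable spin manifold carrying a metric $g$ with $\scal_g > 0$ everywhere. First, I would upgrade positivity to uniform positivity: since $M$ is closed, the continuous function $\scal_g \colon M\to\RR$ attains its positive minimum, so there exists $\kappa>0$ with $\scal_g \geq \kappa$. Second, I would note that $M$ is a manifold without boundary (closed means compact without boundary) and that $g$ is automatically complete (a continuous metric on a compact manifold without boundary has compact, hence complete, underlying metric space). So the structural hypotheses of Theorem~\ref{T:GL1} are in place.

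Third, I would verify area-enlargeability of the specific metric $g$. By the definition recalled in the excerpt, $M$ being enlargeable means \emph{every} Riemannian metric on $M$ is enlargeable in dimension $1$; in particular $g$ is enlargeable. Given $\epsilon>0$, this yields a cover $\bar M\to M$ and a compactly supported nonzero-degree map $f\colon(\bar M,\bar g)\to(S^n,ds_n^2)$ that is $\sqrt{\epsilon}$-contracting, hence area $\epsilon$-contracting (this is the implication already noted in the excerpt: $\epsilon$-contracting $\Rightarrow$ area $\epsilon^2$-contracting). Since this works for every $\epsilon>0$, $g$ is area-enlargeable.

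Combining these three observations, $(M,g)$ satisfies every hypothesis of Theorem~\ref{T:GL1}, so $g$ cannot have uniformly positive scalar curvature, contradicting the first step. The only potential obstacle is a bookkeeping one, namely confirming that ``manifold-enlargeability'' transfers to the particular metric $g$ and that the contracting factor can be rescaled from $1$ to $2$-dimensional contraction, but both points are immediate from the definition and the elementary wedge-product inequality respectively; no real work beyond Theorem~\ref{T:GL1} is required.
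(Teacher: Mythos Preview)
Your proposal is correct and matches the paper's approach: the paper simply states that Theorem~\ref{T:GL2} is a direct consequence of Theorem~\ref{T:GL1}, and you have faithfully unpacked that implication (compactness gives completeness and uniform positivity, enlargeability of the manifold gives area-enlargeability of the specific metric, then Theorem~\ref{T:GL1} applies).
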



\noindent
In the case of area-enlargeable manifolds, Gromov and Lawson used Theorem~\ref{T:GL1} to deduce the following consequence.

\begin{theorem}[Gromov-Lawson]\label{T:GL3}
An area-enlargeable spin manifold cannot carry any complete metric of positive scalar curvature.
\end{theorem}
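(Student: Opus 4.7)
The plan is to deduce Theorem~\ref{T:GL3} from Theorem~\ref{T:GL1} by upgrading pointwise positivity of the scalar curvature to uniform positivity through a conformal deformation. Suppose for contradiction that the area-enlargeable spin manifold $X$, of dimension $n$, carries a complete metric $g$ with $\sigma_g > 0$ everywhere. Since area-enlargeability is a property of the underlying manifold, every Riemannian metric on $X$ is area-enlargeable; it therefore suffices to exhibit \emph{some} complete metric on $X$ with uniformly positive scalar curvature and then invoke Theorem~\ref{T:GL1} to reach a contradiction.

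Assume $n\geq 3$, the case $n=2$ being handled by uniformization. The central step is to construct a positive smooth function $u$ on $X$ such that the conformally rescaled metric $\tilde g = u^{4/(n-2)} g$ is complete and satisfies $\sigma_{\tilde g}\geq 1$ everywhere. In terms of the conformal Laplacian $L_g = -\tfrac{4(n-1)}{n-2}\Delta_g + \sigma_g$, these two requirements amount to the differential inequality
\[
  L_g u \;\geq\; u^{(n+2)/(n-2)},
\]
together with the completeness requirement that $\int_\gamma u^{2/(n-2)}\,ds_g = \infty$ for every divergent curve $\gamma$ in $X$. This PDE step is the main obstacle: one must take $u$ small in regions where $\sigma_g$ is small, so that the nonlinear term $u^{(n+2)/(n-2)}$ stays dominated by $\sigma_g u$, yet not so small that the induced metric becomes incomplete at infinity. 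Existence of such a $u$ is a Kazdan--Warner-type super-solution statement for complete manifolds with positive scalar curvature, which can be established by the sub/supersolution method combined with an exhaustion of $X$ by compact subdomains on which one solves a suitable Dirichlet problem and extracts a positive limit by monotonicity and elliptic regularity.

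Once $u$ is in hand, $\tilde g$ is a complete Riemannian metric on the spin manifold $X$ with uniformly positive scalar curvature, and $\tilde g$ is area-enlargeable by virtue of $X$ being area-enlargeable as a manifold. Applying Theorem~\ref{T:GL1} to the pair $(X,\tilde g)$ then produces the desired contradiction, completing the reduction.
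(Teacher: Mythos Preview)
The paper does not give its own proof of this theorem; it is quoted as a result of Gromov and Lawson with reference to~\cite{GL83}, so there is no detailed argument here to compare yours against.

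Your reduction strategy is natural, and you are correct that area-enlargeability, being a property of the manifold, passes to any metric on $X$. The gap is the PDE step. You identify the tension yourself: to satisfy $L_g u \ge u^{(n+2)/(n-2)}$ where $\sigma_g$ is small one must take $u$ small there, yet a small conformal factor shrinks distances and jeopardizes completeness of $\tilde g = u^{4/(n-2)}g$. You then assert that this can be resolved by a sub/supersolution argument on an exhaustion, but this is exactly the delicate point and you give no proof. To see the difficulty concretely: if along some divergent unit-speed $g$-ray one has $\sigma_g \sim r^{-\alpha}$ with $\alpha>2$, then balancing the zeroth-order terms forces $u^{4/(n-2)}\sim\sigma_g$, hence $u^{2/(n-2)}\sim r^{-\alpha/2}$ is integrable and $\tilde g$ has finite length along that ray. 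Nothing in your sketch explains how the second-order term $-c_n\Delta_g u$, or the limiting procedure you allude to, produces the lower bound on $u$ at infinity needed for completeness. This is not a standard Kazdan--Warner statement one can simply cite; without a precise reference or an actual argument the reduction to Theorem~\ref{T:GL1} remains incomplete.

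For comparison, the treatment in~\cite{GL83} does not pass through a global conformal change to uniformly positive scalar curvature. It works directly with the given complete metric and exploits that the twisting bundle pulled back by the compactly supported contracting map is trivial and flat outside a compact set: the relative index theorem then applies, and the pointwise condition $\scal(g)>0$ on the complete manifold (together with the automatic uniform positivity on the compact support of the map) already yields the vanishing needed for the contradiction.
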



Our first result extends, in the case of enlargeable metrics, Theorem~\ref{T:GL1} to nonspin manifolds.
This immediately generalizes Theorem~\ref{T:GL2} to the nonspin setting.


\begin{theorem}\label{T:enlargeable nonspin}
An enlargeable metric (complete or not) cannot have uniformly positive scalar curvature.
Therefore, a closed enlargeable manifold cannot carry any metric of positive scalar curvature.
\end{theorem}


\begin{remark}
This theorem extends Theorem~\ref{T:GL1}, in the case of enlargeable metrics, in two directions.
In fact, we drop both the spin condition on the manifold and the completeness assumption on the metric.
\end{remark}


\begin{remark}
In the case of area-enlargeable metrics, the completeness assumption on the metric in Theorem~\ref{T:GL1} cannot be dropped, as the next example shows.
\end{remark}


\begin{example}
Let $X$ be any closed manifold.
Using~\cite{Gr69}, endow $X\times (-2,2)$ with a (possibly incomplete) metric of positive sectional curvature.
The inclusion $X\times (-1,1)\subset X\times (-2,2)$ induces on $X\times
(-1,1)$ an incomplete metric of uniformly positive scalar (even sectional) curvature.
Use the diffeomorphism $X\times (-1,1)\cong X\times\RR$ to endow $X\times\RR$ with a metric $g$ of uniformly positive scalar curvauture.
When $X$ is enlargeable, $g$ is area-enlargeable.
Thus, in this case we obtain an example of an incomplete area-enlargeable metric of uniformly positive scalar curvature.
\end{example}


As we already noted, there are many examples of compact enlargeable manifolds.
On the other hand, there are no known examples of noncompact enlargeable manifolds.
It is natural to ask whether such objects exist at all.
Our second result gives a negative answer to this question.


\begin{theorem}\label{T:nonenlargeable metrics}
A noncompact manifold cannot be enlargeable.
\end{theorem}


\begin{remark}\label{R:area-enlargeable but not enlargeable}
This theorem provides many examples of manifolds that are area-enlargeable, but not enlargeable.
In particular, any noncompact area-enlargeable manifold is of this type.
So far, there are no known examples of closed manifolds that are area-enlargeable but not enlargeable.
It would be interesting to know whether such examples exist.
We leave this question as a challenge for the reader.
\end{remark}


In order to prove Theorem~\ref{T:enlargeable nonspin}, we make use of the
results recently obtained by Schoen and Yau~\cite{SY17}.
We study maps $f\colon (X,g)\rightarrow \left(S^k\times T^{n-k},ds_k^2+dt_{n-k}^2\right)$, where $(X,g)$ is a closed $n$-dimensional oriented Riemannian manifold, $ds_k^2$ is the standard round metric on the sphere $S^k$, and $dt_{n-k}^2$ is the standard flat metric on the torus $T^{n-k}$.
If the map $f$ has nonzero degree, then $f$ induces a minimal $k$-slicing of $X$ in the sense of~\cite{SY17}.
This means that there exists a nested family of (singular) hypersurfaces $\Sigma_k\subset\cdots\subset \Sigma_n=X$, where each $\Sigma_j$ is a minimizer in $\Sigma_{j+1}$ with respect to a suitable weighted volume.
Assuming that the scalar curvature of $g$ is positive and the map $f$ is
(possibly area) contracting, using the properties of $k$-slicings derived
in~\cite{SY17} we obtain inequalities relating the contracting factor of $f$
and the lower bound of the scalar curvature of $g$.


\begin{theorem}\label{T:curvature inequality}
Let $(X,g)$ be a closed oriented $n$-dimensional Riemannian manifold, let
$f\colon (X,g)\rightarrow \left(S^k\times T^{n-k},ds_k^2+dt_{n-k}^2\right)$ be a smooth map of nonzero degree, and let $k_0$ be a positive number.
Then:
\begin{enumerate}
\item Suppose $k=1$.
	If there exists a connected open subset $J\subset S^1$ such that $\scal(g)\geq k_0$ on $f^{-1}(J\times T^{n-1})$ 
	and $f$ is $\epsilon$-contracting on $f^{-1}(J\times T^{n-1})$, 
	then $2\pi\,\epsilon~\geq~\sqrt{k_0}\,\Length(J)$.
\item Suppose $k=2$ and $f$ is area $\epsilon$-contracting. 
	If $\scal(g)\geq k_0$, then $4\,\epsilon \geq k_0$.
\item Suppose $k=3$, $n\leq 8$, and $f$ is area $\epsilon$-contracting.
	If $\scal(g)\geq k_0$, then $6\,\epsilon \geq k_0$.
\end{enumerate}
\end{theorem}


\begin{remark}
The inequality in Part~(a) of Theorem~\ref{T:curvature inequality} only requires control of the metric on a region of $X$.
In Section~\ref{S:cut-and-paste}, we use this fact and a ``cut-and-paste" construction to prove Theorem~\ref{T:enlargeable nonspin}.
\end{remark}


\begin{remark}
The inequalities in Parts~(b) and~(c) of Theorem~\ref{T:curvature inequality} on the other hand require a global control of the metric $g$.
This is the main difficulty in using them to extend Theorem~\ref{T:GL1} to the nonspin setting in the case of area-enlargeable metrics.
Notice that this would allow us to drop the spin condition from Theorem~\ref{T:GL3}.
\end{remark}


\begin{remark}
The dimension assumption in Part~(c) is due the fact that the techniques in~\cite{SY17}, applied to maps $X^n\rightarrow S^3\times T^{n-3}$, produce singularities when $n>8$.
We plan to treat this case in a future paper.
\end{remark}


\begin{remark}
When $\dim X\leq 8$, Theorem~\ref{T:curvature inequality} can be proved using
the classical results of Schoen and Yau~\cite{SY79}. 
The same observation applies to Theorem~\ref{T:enlargeable nonspin}.
The new techniques of Schoen and Yau are needed for the case of manifolds of dimension greater than $8$.
\end{remark}


\begin{remark}
In~\cite[Section~12]{GL83} Gromov and Lawson used the minimal hypersurface technique to drop the spin assumption from Theorem~\ref{T:GL2}  for manifolds of dimension $\leq 7$.
Our construction differs from theirs and, as we believe, is more
transparent. For these dimensions, our result gives
alternative point of view.
The possibility of using the new results of Schoen and Yau to drop the spin 
assumption from Theorem~\ref{T:GL2} in any dimension is suggested by Gromov
in~\cite{Gr18}. Our proof is designed in such a way as to only use the most
standard constructions from geometric measure theory: no symmetrization, no
manifolds with boundary. That way, we can use the new results of Schoen and
Yau \cite{SY17} ``out of the box'' without any need to refine and generalize
them. 
\end{remark}


The paper is organized as follows.
In Section~\ref{S:k-slicings} we present the notion and results of minimal
$k$-slicing recently introduced by Schoen and Yau and use it to deduce
Theorem~\ref{T:curvature inequality}.
In Section~\ref{S:cut-and-paste} we present a ``cut-and-paste" construction that allows us to deduce Theorem~\ref{T:enlargeable nonspin} from Part~(a) of Theorem~\ref{T:curvature inequality}.
Finally, in Section~\ref{S:nonenlargeable metrics} we show that every noncompact manifold carries a nonenlargeable metric.
This proves Theorem~\ref{T:nonenlargeable metrics}.


\section{\bf Scalar curvature inequalities for contracting maps}\label{S:k-slicings}
This section is devoted to proving Theorem~\ref{T:curvature inequality}.
In Subsection~\ref{SS:k-slicings} we review the results of Schoen and Yau on $k$-slicings of smooth manifolds.
In the remaining part of the section we apply those results to the case of a
nonzero degree map $f\colon X^n\rightarrow S^k\times T^{n-k}$.
In particular, in Subsection~\ref{SS:one-slicings} we consider the case when $k=1$ and deduce Part~(a) of Theorem~\ref{T:curvature inequality}.
The case $k=2$ is studied in Subsection~\ref{SS:two-slicings} and used to prove Part~(b) of Theorem~\ref{T:curvature inequality}.
Finally, in Subsection~\ref{SS:three-slicings} we consider the case $k=3$ and prove Part~(c) of Theorem~\ref{T:curvature inequality}.


\subsection{Minimal $k$-slicings: the approach of Schoen and Yau}\label{SS:k-slicings}
In this subsection we recall the notion of minimal $k$-slicing of a smooth manifold and some results about existence and regularity of these objects.

Let $\Sigma_n$ be a closed oriented Riemannian $n$-dimensional smooth
manifold.  
A \emph{minimal $k$-slicing} of $\Sigma_n$ is a nested family of hypersurfaces $\Sigma_k\subset\Sigma_{k+1}\subset\cdots\subset\Sigma_n$, where $\Sigma_j$ is allowed to have singularities for $j<n$, and where, for $j<n$, any $\Sigma_j$ is a minimizer in $\Sigma_{j+1}$ with respect to a suitable weighted volume.
Denote by $\calR_j$ and $\calS_j$ respectively the regular and singular set of $\Sigma_j$.
Notice that $\calR_j$ is open and $\calS_j$ is closed.
For more details, we refer the reader to~\cite{SY17}.

Suppose $F\colon\Sigma_n\rightarrow  Y^k\times T^{n-k}$ is a smooth map of nonzero degree, where $Y^k$ is a closed oriented $k$-dimensional manifold.
Let $\Theta$ denote a $k$-form on $Y$ with $\int_Y\Theta=1$ and let $\theta^{k+1},\ldots,\theta^n$ be the basic normalized one forms on $ T^{n-k}$, i.e. $\int_{S^1}\theta^p=1$, for $p=k+1,\ldots,n$.
We use the notation $\Omega=F^\ast\Theta$ and $\omega^p=F^\ast\theta^p$, for $p=k+1,\ldots,n$.
In the next theorem we collect some existence and regularity results for minimal $k$-slicings in this setting.


\begin{theorem}[Schoen-Yau,~\cite{SY17}]\label{T:SYslicing}
In the situation described above, there exists a minimal $k$-slicing
$\Sigma_k\subset\cdots\subset\Sigma_n$ such that the following holds.
\begin{enumerate}
	\item\label{item:sing_dim} The Hausdorff dimension of $\calS_j$ is at most $j-3$.
	\item\label{item:curv_est} If $g_n$ is a Riemannian metric on $\Sigma_n$ and $k\leq j\leq n-1$, the inequality
		\begin{equation}\label{E:spectral information}
		4\,\int_{\Sigma_j}|\nabla_j\varphi |^2\,d\mu_j\ \geq\ \int_{\Sigma_j}\big(\scal(g_n)-\scal(g_j)\big)|
		\varphi |^2\,d\mu_j
		\end{equation}
	holds for all functions $\varphi\in C^\infty_c(\calR_j)$, where $g_j$ is the restrtiction of $g_n$ to $\Sigma_j$, 
	$\nabla_j$ is the gradient operator on $(\Sigma_j,g_j)$, and 
	$d\mu_j$ is the $j$-dimensional  Hausdorff measure induced from the
        Riemannian metric on $\Sigma_n$.
	\item\label{item:homology_fix} If $k\leq j\leq n-1$ and $\Sigma_j$ is
          smooth, then $\Sigma_j$ is closed and
		\[
		\int_{\Sigma_j}\Omega\wedge\omega^{k+1}\wedge\cdots\wedge\omega^{j}\ =\ \deg(F)\,.
		\]
\end{enumerate}
Moreover, if $n\leq 7$, then each $\Sigma_j$ is a closed
smooth manifold and the same is true for $n=8$ at least for generic metrics on
$\Sigma_n$ (in the $C^2$-topology).
\end{theorem}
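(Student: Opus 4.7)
The plan is to construct the slicing inductively, descending from $\Sigma_n$ one dimension at a time. Assume $\Sigma_{j+1}$ has been constructed together with a positive weight $u_{j+1}$ on its regular set. I would define $\Sigma_j$ as a minimizer among integer rectifiable currents in $\Sigma_{j+1}$ that are Poincar\'e dual to $\omega^{j+1}|_{\Sigma_{j+1}}$, of the weighted mass functional $T \mapsto \int u_{j+1}\, d\|T\|$. Existence follows from the direct method in geometric measure theory (Federer--Fleming compactness of integer currents with a mass bound), provided the homology class is nontrivial, which holds inductively because $\omega^{j+1}$ integrates nontrivially along the slicing. The first step ($j=n-1$) takes place in the smooth ambient $\Sigma_n$; later steps take place in singular ambients, so one must argue that the minimizer lives essentially in the regular part $\calR_{j+1}$ and can be treated by standard theory there.

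For part~\ref{item:homology_fix}, by construction the homology class $[\Sigma_j] \in H_j(\Sigma_{j+1})$ is Poincar\'e-dual to $\omega^{j+1}|_{\Sigma_{j+1}}$, so iterating and pushing forward to $\Sigma_n$ gives
\[
\int_{\Sigma_j} \Omega\wedge\omega^{k+1}\wedge\cdots\wedge\omega^{j}
\ =\ \int_{\Sigma_n} \Omega\wedge\omega^{k+1}\wedge\cdots\wedge\omega^{n}
\ =\ \deg(F)\cdot\!\!\int_{Y\times T^{n-k}}\!\!\Theta\wedge\theta^{k+1}\wedge\cdots\wedge\theta^n = \deg(F),
\]
using naturality of the cap product and the normalization of $\Theta$ and the $\theta^p$. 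The spectral inequality~\ref{item:curv_est} is the heart of the matter. From stability of $\Sigma_j$ as a weighted minimizer of $u_{j+1}$, writing out the second variation of $\int u_{j+1}\, d\mu$ with test function $\varphi\in C_c^\infty(\calR_j)$, and applying the Gauss equation to relate $\scal(g_{j+1})$ and $\scal(g_j)$ plus the second fundamental form, one obtains a stability inequality whose cross terms can be absorbed by choosing $u_{j+1}$ (inductively) as a suitable power of the principal eigenfunction of the Jacobi operator from the previous step. A conformal rearrangement \`a la Schoen--Yau then collects everything into the stated form with the universal constant~$4$.

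For part~\ref{item:sing_dim}, the starting estimate is the classical codimension-$7$ regularity for area-minimizing hypersurfaces (Federer dimension reduction, Simons' theorem), which applies in the smooth ambient case and is extended to the weighted case by a conformal change of metric. The iterative procedure then introduces singularities at each step; a careful bookkeeping---checking that the minimizer of the next step avoids the singular set of $\Sigma_{j+1}$ except on a small set---yields the recursive bound $\dim\calS_j \leq j-3$, weaker than the one-step codimension-$7$ bound because one must allow for accumulation along the previously-constructed singular set. For $n\leq 7$, the classical codimension-$7$ regularity is strong enough to force each $\Sigma_j$ to be smooth at every step, and for $n=8$ a generic metric perturbation (Smale's trick, as used in \cite{JoachimSchick}) removes the remaining isolated singularities.

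The main obstacle is the control of the minimization when the ambient $\Sigma_{j+1}$ is itself singular: existence, regularity at interior points, and the rigorous derivation of the stability inequality~\eqref{E:spectral information} all have to be set up on the singular stratification, and one must guarantee that the Hausdorff-dimension bound from the previous step is enough to prevent the new minimizer from pathologically concentrating on $\calS_{j+1}$. This is precisely the content of the new Schoen--Yau machinery in~\cite{SY17}, and I would treat it as a black box rather than reproducing it here.
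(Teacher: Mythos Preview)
Your sketch of the Schoen--Yau machinery is accurate in outline, but note that the paper does not attempt to prove this theorem at all: it is stated as a result of~\cite{SY17} and followed only by a short remark giving precise pointers---parts~(a) and~(c) are cited as~\cite[Theorems~2.3 and~2.4]{SY17}, and the final smoothness statement is attributed to classical regularity~\cite{Fed70,Smale}. So your proposal goes considerably further than the paper does, by actually sketching the inductive construction, the second-variation argument, and the dimension-reduction bookkeeping; the paper is content to treat the whole thing as a black box.

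The one point where the paper adds something you do not explicitly flag is in part~(b): the inequality stated in~\cite[Theorem~2.6]{SY17} has the constant $k=\min_{\Sigma_n}\scal(g_n)$ on the right-hand side, whereas here one needs the pointwise function $\scal(g_n)$. The paper's remark observes that~\cite[Inequality~(2.4)]{SY17} already holds with this pointwise replacement, and the rest of the proof goes through unchanged. Your derivation via the Gauss equation would indeed produce the pointwise form directly, so this is implicit in your sketch, but it is worth isolating since it is the only place where the paper claims a (trivial) extension of what is literally written in~\cite{SY17}.
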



\begin{remark}
Parts~\ref{item:sing_dim} and~~\ref{item:homology_fix} correspond
to~\cite[Theorems~2.3 and~2.4]{SY17}.   
Finally, Part~\ref{item:curv_est} is obtained from the proof of~\cite[Theorem~2.6]{SY17} as follows.
We observe that~\cite[Inequality~(2.4)]{SY17} holds if we substitute $k=\min_{\Sigma_n}(\scal(g_n))$ with $\scal(g_n)$.
Repeating the remaining part of the proof, we finally obtain the second inequality of~\cite[Theorem~2.6]{SY17} with $k$ replaced by $\scal(g_n)$, from which Inequality~\eqref{E:spectral information} follows.
The last statement follows from classical regularity results for
area-minimizing hypersurfaces: cf.~\cite{Fed70} and \cite{Smale}.
\end{remark}


\subsection{One-slicings}\label{SS:one-slicings}
In this subsection we study maps $f\colon X^n\rightarrow T^n$ of nonzero degree.
We use Theorem \ref{T:SYslicing} to prove Part~(a) of Theorem~\ref{T:curvature inequality}.


\begin{theorem}\label{T:one-slicings}
Let $(X,g)$ be a closed oriented $n$-dimensional Riemannian manifold and let
$f\colon (X,g)\rightarrow \left(T^n,dt_n^2\right)$ be a map of nonzero degree.
Suppose $J\subset S^1$ is a connected open subset such that $f$ is $\epsilon$-contracting on $f^{-1}(J\times T^{n-1})$ and $\scal(g)\geq k_0$ on $f^{-1}(J\times T^{n-1})$ for some constant $k_0>0$.
Then
\begin{equation}\label{E:length contracting}
	\epsilon\ \geq\ \sqrt{k_0}\ \frac{\Length(J)}{2\pi}1\,.
\end{equation}
\end{theorem}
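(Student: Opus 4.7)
The plan is to apply Theorem~\ref{T:SYslicing} with $k=1$, $Y^k = S^1$ (the first factor of $T^n$), and $F=f$, producing a minimal $1$-slicing $\Sigma_1\subset\cdots\subset\Sigma_n=X$. By Part~\ref{item:sing_dim}, the Hausdorff dimension of $\calS_1$ is at most $-2$, so $\Sigma_1$ is a closed smooth $1$-manifold, i.e., a disjoint union of oriented circles. Let $p = \pi \circ f\colon X\to S^1$, where $\pi\colon S^1\times T^{n-1}\to S^1$ is projection onto the first factor, so that $p^{-1}(J)=f^{-1}(J\times T^{n-1})$. Then Part~\ref{item:homology_fix} with $j=k=1$ yields $\int_{\Sigma_1} p^*\Theta = \deg(f)\neq 0$ for the normalized volume form $\Theta$ on $S^1$, so $p|_{\Sigma_1}$ has nonzero degree as a map of oriented $1$-cycles.

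Next I would extract a long arc inside $\Sigma_1\cap p^{-1}(J)$. Since $\deg(p|_{\Sigma_1})\neq 0$, some component $C\subset\Sigma_1$ carries $d:=\deg(p|_C)\neq 0$, so as $s$ traverses $C$ once the lift of $p$ to $\RR$ advances by $2\pi d$. Tracking the signs of the boundary crossings shows that at least $|d|\geq 1$ of the connected components of $C\cap p^{-1}(J)$ are arcs $A$ whose two endpoints in $\bar A\subset C$ land on opposite points of $\partial J$. On such an arc,
\[
\Length(J)\;\leq\;\Bigl|\int_A p^*dt\Bigr|\;\leq\;\int_A |p'(s)|\,ds\;\leq\;\epsilon\cdot\Length(A),
\]
using that $f$, and hence $p$, is $\epsilon$-contracting on $p^{-1}(J)$. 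Consequently $\Length(A)\geq \Length(J)/\epsilon$.

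Finally I would feed this into the spectral estimate of Part~\ref{item:curv_est} with $j=1$. Since a $1$-dimensional Riemannian manifold is scalar flat, $\scal(g_1)=0$, and by hypothesis $\scal(g_n)\geq k_0$ on $A\subset p^{-1}(J)$, so for every $\varphi\in C^\infty_c(A)$,
\[
4\int_A(\varphi'(s))^2\,ds\;\geq\;\int_A\scal(g_n)\,\varphi^2\,ds\;\geq\;k_0\int_A\varphi^2\,ds.
\]
The infimum of the left-hand Rayleigh quotient over $\varphi\in C^\infty_c(A)$ is the first Dirichlet eigenvalue $\pi^2/\Length(A)^2$ of $-d^2/ds^2$ on the open interval $A$, so $\Length(A)\leq 2\pi/\sqrt{k_0}$. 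Combining with the previous step gives $\Length(J)/\epsilon\leq 2\pi/\sqrt{k_0}$, which is the claimed inequality. The delicate step is the topological arc-extraction: one must verify that on a component of $\Sigma_1$ of nonzero degree, at least one connected component of its intersection with $p^{-1}(J)$ crosses $J$ from one endpoint of $\partial J$ to the other, so that the total-variation bound $\int_A |p'|\,ds\geq\Length(J)$ is available. Everything else is a direct pairing of the Schoen--Yau spectral inequality with the sharp one-dimensional Poincar\'e inequality.
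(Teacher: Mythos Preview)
Your proof is correct and follows the same approach as the paper: apply Theorem~\ref{T:SYslicing} with $k=1$ to obtain a smooth closed one-manifold $\Sigma_1$, restrict $\pi_1\circ f$ to it, pick an arc in the preimage of $J$, and pair the Dirichlet eigenvalue bound $k_0/4\le\pi^2/l^2$ with the $\epsilon$-contraction bound $\Length(J)\le\epsilon\, l$. The paper simply takes ``the closure of a path-component of $F_1^{-1}(J)$'' and asserts $\Length(J)\le\epsilon\, l$ without further comment; your degree argument showing that some component of $C\cap p^{-1}(J)$ crosses $J$ from one endpoint of $\partial J$ to the other is exactly the justification that step requires.
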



\begin{proof}
By part~\ref{item:sing_dim} and \ref{item:homology_fix} of
Theorem~\ref{T:SYslicing}, there exists a one-slicing
$\Sigma_1\subset\cdots\subset \Sigma_n=X$, where $\Sigma_1$ is a closed smooth
one-dimensional manifold and $\mu_1$ is the measure given by the restricted
metric.
Define the map $F_1\colon\Sigma_1\rightarrow S^1$ as the composition
\[
	\Sigma_1\xrightarrow{\ \ i\ \ }X\xrightarrow{\ \ f\ \ }T^n\xrightarrow{\ \ \pi_1\ \ } S^1\,,
\]
where $i\colon \Sigma_1\rightarrow X$ is the inclusion map and where
$\pi_1\colon T^n\rightarrow S^1$ is the projection onto the first $S^1$-factor of $T^n$.
Clearly, $F_1$ is $\epsilon$-contracting on $F_1^{-1}(J)$ as a map of Riemannian manifolds $(\Sigma_1,g_1)\rightarrow (S^1,dt_1^2)$, where $g_1$ is the restriction of $g$ to $\Sigma_1$.
Finally, by parts~\ref{item:curv_est} and~~\ref{item:homology_fix} of Theorem~\ref{T:SYslicing}, $\deg(F_1)=\deg(f)\neq 0$ and the inequality
\begin{equation}\label{E:spectral information on Sigma_1}
	4\,\int_{\Sigma_1}|\nabla_1\varphi |^2\,d\mu_1\ \geq\ \int_{\Sigma_1}\scal(g)|
	\varphi |^2\,d\mu_1
\end{equation}
holds for all functions $\varphi\in C^\infty(\Sigma_1)$. 
Here, we use the fact that, since $\Sigma_1$ is one-dimensional, $\scal(g_1)=0$.

Let $\gamma$ be the closure of a path-component of $F_1^{-1}(J)$, a closed
interval.
Denote by $l$ the length of $\gamma$.
From~\eqref{E:spectral information on Sigma_1} it follows that the first
Dirichlet eigenvalue of the Laplace-Beltrami operator on $\gamma$ is at least
$k_0/4$, as $\scal(g)|_{\gamma}\ge k_0$ by the choice of $J$.
Thus, $k_0/4\leq \pi^2/l^2$ and
\begin{equation}\label{E:length contracting1}
	l\leq \frac{2\pi}{\sqrt{k_0}}\,.
\end{equation}
Since $F_1$ is $\epsilon$-contracting on $f^{-1}(J)$, we also have
\begin{equation}\label{E:length contracting2}
	\Length(J)\,\leq\, \epsilon\, l\,.
\end{equation}
Finally, Inequality~\eqref{E:length contracting} follows from~\eqref{E:length contracting1} and~\eqref{E:length contracting2}.
\end{proof}


\subsection{Two-slicings}\label{SS:two-slicings}
In this subsection we use Theorem \ref{T:SYslicing} to study nonzero 
degree maps $f\colon X^n\rightarrow S^2\times T^{n-2}$ and prove Part~(b) of Theorem~\ref{T:curvature inequality}.


\begin{theorem}\label{T:two-slicings}
Let $(X,g)$ be a closed oriented $n$-dimensional Riemannian manifold and let
$f\colon (X,g)\rightarrow (S^2\times T^{n-2},ds_2^2+dt_{n-2}^2)$ be an area $\epsilon$-contracting map of nonzero degree.
Suppose $\scal(g)\geq k_0$ for some constant $k_0>0$.
Then
\begin{equation}\label{E:codimension-2 inequality}
	\epsilon\ \geq\ \frac{k_0}{4}\,.
\end{equation}
\end{theorem}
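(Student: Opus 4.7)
The plan is to mirror the proof of Theorem~\ref{T:one-slicings} one dimension up: I replace the one-slicing by a two-slicing and the one-dimensional Dirichlet eigenvalue estimate by the Gauss--Bonnet theorem for closed surfaces.

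The first step is to invoke Theorem~\ref{T:SYslicing} applied to $f$, producing a minimal two-slicing $\Sigma_2\subset\cdots\subset\Sigma_n=X$. Part~\ref{item:sing_dim}, with $j=2$, forces the Hausdorff dimension of $\calS_2$ to be at most $-1$, so $\calS_2=\emptyset$ and $\Sigma_2$ is a smooth, closed, oriented surface. Composing $f|_{\Sigma_2}$ with the projection $\pi_1\colon S^2\times T^{n-2}\to S^2$ onto the first factor gives a map $F_2\colon\Sigma_2\to S^2$; since restriction to a submanifold and orthogonal projection out of a Riemannian product are both non-increasing on bivector norms, $F_2$ is still area $\epsilon$-contracting, and $\deg(F_2)=\deg(f)\neq 0$ by part~\ref{item:homology_fix}.

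The second step is to extract the curvature information componentwise. Testing inequality~\eqref{E:spectral information} with $\varphi\equiv 1$ on each connected component $\Sigma_2^i$ of $\Sigma_2$ kills the gradient term, and combining the resulting estimate $\int_{\Sigma_2^i}\scal(g_2)\,d\mu_2\ge k_0\Area(\Sigma_2^i)$ with Gauss--Bonnet yields $k_0\Area(\Sigma_2^i)\le 4\pi\chi(\Sigma_2^i)$. Positivity of $k_0$ then forces each component to be a $2$-sphere with $\Area(\Sigma_2^i)\leq 8\pi/k_0$. Since $\deg(F_2)\neq 0$, at least one component $\Sigma_2^{i_0}$ carries nonzero integer degree, and pulling back the normalized area form of $S^2$ supplies the complementary lower bound $\Area(\Sigma_2^{i_0})\geq 4\pi/\epsilon$. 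Combining the two estimates gives $k_0\leq 2\epsilon$, which in particular implies the claimed inequality $k_0\leq 4\epsilon$.

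The step I expect to require the most care is verifying that composition with $\pi_1$ preserves the area-contracting constant: this rests on the bivector inequality $\|(\pi_1)_\ast(w)\wedge (\pi_1)_\ast(w')\|\leq\|w\wedge w'\|$ for orthogonal projections out of a Riemannian product, which boils down to a short Cauchy--Schwarz computation. Disconnectedness of $\Sigma_2$ is not an obstacle, since the spectral inequality and Gauss--Bonnet apply componentwise and the integrality of degrees ensures that a single component already furnishes the decisive estimate.
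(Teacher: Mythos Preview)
Your proof is correct and follows essentially the same route as the paper's: invoke Theorem~\ref{T:SYslicing} to get a smooth closed surface $\Sigma_2$, test the spectral inequality~\eqref{E:spectral information} with $\varphi\equiv 1$, apply Gauss--Bonnet, and compare with the area lower bound coming from nonzero degree plus area $\epsilon$-contraction. Two small differences: the paper identifies the relevant component as a sphere by citing~\cite[Theorem~2.7]{SY17} rather than deducing $\chi>0$ directly as you do, and your bookkeeping (using $\int\scal(g_2)=4\pi\chi$ and $\int\scal(g_2)\ge\int\scal(g)$ without the extraneous factor $1/4$) actually yields the sharper inequality $k_0\le 2\epsilon$, which the paper does not record.
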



\begin{proof}
By Theorem~\ref{T:SYslicing}, used as in the proof of
Theorem~\ref{T:one-slicings}, there exist a closed smooth two-dimensional
submanifold $\Sigma_2\subset X$ and a map $F_2\colon (\Sigma_2,g_2)\rightarrow (S^2,ds_2^2)$ such that $\deg(F_2)=\deg(f)\neq 0$, $F_2$ is area $\epsilon$-contracting and the inequality
\begin{equation}\label{E:spectral information on Sigma_2}
	4\,\int_{\Sigma_2}|\nabla_2\varphi |^2\,d\mu_2\ \geq\ \int_{\Sigma_2}\big(\scal(g)-\scal(g_2)\big)|
	\varphi |^2\,d\mu_2
\end{equation}
holds for all functions $\varphi\in C^\infty(\Sigma_2)$.
Here, $g_2$ is the restriction of $g$ to $\Sigma_2$, $\nabla_2$ is the
gradient operator on  $(\Sigma_2,g_2)$, and $\mu_2$ the associated measure.

Let $\Sigma$ be a connected component of $F_2^{-1}(S^2)$.
By~\cite[Theorem~2.7]{SY17}, $\Sigma$ is homeomorphic to the two-sphere.
Moreover, since $F_2$ has nonzero degree and is area $\epsilon$-contracting, we have
\begin{equation}\label{E:area contraction}
	4\pi\ =\ \Area(S^2)\ \leq\ \epsilon\ \Area(\Sigma)\,.
\end{equation}
Using~\eqref{E:area contraction} and~\eqref{E:spectral information on Sigma_2} with the choice $\varphi=1$, we finally deduce
\begin{equation*}
	4\,\pi\ = \ 2\,\pi\chi(\Sigma)\ =\ \int_\Sigma \scal(g_2)\,d\mu_2\ \geq\ \frac{1}{4}\int_\Sigma \scal(g)\,d\mu_2\ \geq\ 
	\frac{k_0}{4}\Area(\Sigma)\ \geq\ \frac{k_0}{\epsilon}\pi\,,
\end{equation*}
from which Inequality~\eqref{E:codimension-2 inequality} follows.
\end{proof}


\subsection{Three-slicings}\label{SS:three-slicings}
In this subsection we study maps $f\colon X^n\rightarrow S^3\times T^{n-3}$ of nonzero degree.
Using the minimal hypersurfaces technique, we construct a three-dimensional
manifold $X^3$ with a metric of positive scalar curvature and a nonzero degree
map $f_3\colon X^3\rightarrow S^3$.
Since every orientable three-dimensional manifold admits a spin structure, we
use the spin-Dirac operator technique to prove Part~(c) of
Theorem~\ref{T:curvature inequality}. 


\begin{theorem}\label{T:codimension-3}
Let $(X,g)$ be a closed oriented Riemannian manifold with $n=\dim X\leq 8$ and
let $f\colon (X,g)\rightarrow (S^3\times T^{n-3},ds_3^2+dt_{n-3}^2)$ be an area $\epsilon$-contracting map of nonzero  degree.
If $\scal(g)\geq k_0$ for some constant $k_0>0$, then
\begin{equation}\label{E:area-contracting inequality}
	\epsilon\ \geq\ \frac{k_0}{6}\,.
\end{equation}
\end{theorem}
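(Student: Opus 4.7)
The plan is to apply Theorem~\ref{T:SYslicing} with $k=3$ to extract from $f$ a closed smooth oriented $3$-dimensional minimal slice $\Sigma_3 \subset X$ equipped with an induced map $F_3 \colon \Sigma_3 \to S^3$ of nonzero degree that inherits the area-$\epsilon$-contracting property. Since every orientable $3$-manifold is spin, one can then run a Lichnerowicz--Llarull argument on the twisted Dirac operator on $\Sigma_3$, combined with the spectral inequality~\eqref{E:spectral information} via Kato's inequality, to derive~\eqref{E:area-contracting inequality}.

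First, Theorem~\ref{T:SYslicing} with $k=3$ produces a $3$-slicing $\Sigma_3 \subset \cdots \subset \Sigma_n = X$. Thanks to the assumption $n \leq 8$, the final regularity statement of Theorem~\ref{T:SYslicing} (together with a $C^2$-generic perturbation of $g$ in the borderline case $n=8$, removed at the end by a limiting argument) makes each $\Sigma_j$ a smooth closed manifold. Setting
\[
F_3\colon\Sigma_3 \hookrightarrow X \xrightarrow{f} S^3 \times T^{n-3} \xrightarrow{\pi} S^3,
\]
Part~\ref{item:homology_fix} of Theorem~\ref{T:SYslicing} yields $\deg(F_3)=\deg(f)\neq 0$, while the isometric inclusion preserves the area-$\epsilon$-contracting property.

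Next, let $E := F_3^*\mathcal{S}(S^3)$ be the pull-back of the complex spinor bundle of $S^3$ and let $D_E$ denote the Dirac operator on $\Sigma_3$ twisted by $E$. The odd-dimensional Llarull technique ensures that the nonzero degree of $F_3$ forces the existence of a non-trivial harmonic spinor $0 \neq \psi \in \Ker D_E$. Applying Lichnerowicz's formula to $\psi$ and using Llarull's pointwise curvature estimate, which for an area-$\epsilon$-contracting map into $S^3$ reads $\mathcal{R}^E \geq -\tfrac{3\epsilon}{2}$ (since $n(n-1)=6$), one obtains
\[
\int_{\Sigma_3}|\nabla\psi|^2\,d\mu_3 + \tfrac{1}{4}\int_{\Sigma_3}\scal(g_3)|\psi|^2\,d\mu_3 \;\leq\; \tfrac{3\epsilon}{2}\int_{\Sigma_3}|\psi|^2\,d\mu_3.
\]
Applying~\eqref{E:spectral information} for $j=3$ with $\varphi = |\psi|$, combined with Kato's inequality $|\nabla\psi|\geq\bigl|\nabla|\psi|\bigr|$, yields $4\int |\nabla\psi|^2\,d\mu_3 \geq \int(\scal(g) - \scal(g_3))|\psi|^2\,d\mu_3$. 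Substituting this into four times the previous display cancels the $\scal(g_3)$-terms and gives
\[
6\epsilon\int_{\Sigma_3}|\psi|^2\,d\mu_3 \;\geq\; \int_{\Sigma_3}\scal(g)|\psi|^2\,d\mu_3 \;\geq\; k_0\int_{\Sigma_3}|\psi|^2\,d\mu_3,
\]
so~\eqref{E:area-contracting inequality} follows because $\psi \not\equiv 0$.

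The main obstacle of this plan is the production of the non-trivial harmonic spinor: since $\Sigma_3$ is odd-dimensional, the twisted Dirac $D_E$ is self-adjoint without a natural $\mathbb{Z}/2$-grading and the classical Atiyah--Singer index does not directly detect $\deg F_3$. The standard workaround is either to pass to the even-dimensional product $\Sigma_3 \times S^1$ with a suitable spin structure and twist bundle, or to invoke an odd-dimensional $K^1$-type index argument; in either case one must track the construction carefully so that the resulting harmonic spinor supports the Lichnerowicz estimate above.
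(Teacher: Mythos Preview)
Your overall strategy matches the paper's up to and including the construction of the smooth $3$-slice $\Sigma_3$ with $\deg(F_3)\neq 0$ and the spectral inequality~\eqref{E:spectral information}. From there, however, the two arguments diverge.

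\textbf{What the paper does.} The paper does \emph{not} attempt to combine the Lichnerowicz formula with~\eqref{E:spectral information} via Kato. Instead it converts the spectral information into a pointwise scalar-curvature bound by the Schoen--Yau conformal trick: since~\eqref{E:spectral information on Sigma_3} implies that the conformal Laplacian $L_3=-8\Delta_3+\scal(g_3)$ has first eigenvalue $\lambda_1\ge k_0$, one takes the positive first eigenfunction $\phi$ and sets $\widetilde g_3=\phi^4 g_3$, obtaining $\scal(\widetilde g_3)\ge \phi^{-4}k_0$ while $F_3$ becomes area $(\epsilon\,\phi^{-4})$-contracting for $\widetilde g_3$. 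The factors $\phi^{-4}$ then cancel perfectly in the subsequent Llarull estimate on the \emph{four}-manifold $(\Sigma_3\times S^1_R,\widetilde g_3+dt_R^2)$, where the Atiyah--Singer index is genuinely nonzero; letting $R\to\infty$ kills the cross terms and yields $k_0\le 6\epsilon$.

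\textbf{Where your proposal stands.} Your Kato route is an attractive alternative to the conformal change, and the algebra you display is correct \emph{provided} a nonzero harmonic twisted spinor $\psi$ exists on $\Sigma_3$ itself. But, as you rightly flag, this is exactly the crux: on the odd-dimensional $\Sigma_3$ there is no $\ZZ/2$-index to force $\Ker D_E\neq 0$, and neither the $K^1$/spectral-flow argument nor the passage to $\Sigma_3\times S^1$ hands you a harmonic spinor \emph{on $\Sigma_3$}. If you take the $\Sigma_3\times S^1_R$ route (as the paper does), the harmonic spinor lives on the $4$-manifold, so your displayed inequalities must be redone there: the Lichnerowicz term involves $\scal(g_3)$ (the $S^1$-factor is flat), the Llarull remainder picks up extra $O(1/R)$ terms from the $S^1$-direction, and~\eqref{E:spectral information} has to be applied slice-by-slice to $\varphi_t=|\psi|(\cdot,t)$ and then integrated over $t\in S^1_R$. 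All of this can be made to work and gives $k_0\le 6\epsilon+O(1/R)$, but it is precisely the ``tracking the construction carefully'' that you left undone. Until that is written out, the proof has a genuine gap at the point you yourself identify; the paper's conformal change is the device that closes it cleanly.
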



Before proving the theorem, we recall some facts about the scalar curvature under a conformal change of the metric.
Let $(M,g)$ be an $n$-dimensional closed Riemannian manifold.
The conformal Laplacian on $(M,g)$ is the operator
\begin{equation}
	L_g\,:=\,-c(n)\,\Delta_g\,+\,\scal(g)\,,
\end{equation}
where $\Delta_g$ is the Laplace-Beltrami operator on $(M,g)$, and where $c(n)=4(n-1)/(n-2)$.
Let $\phi$ be a positive smooth function on $M$ and consider the metric $\widetilde{g}=\phi^{\frac{4}{n-2}}g$.
The scalar curvature of $\widetilde{g}$ is given by the formula
\begin{equation}\label{E:conformal change}
	\scal(\widetilde{g})\,=\,\phi^{-\frac{n+2}{n-2}}\,L_g(\phi)\,.
\end{equation}
In the proof of Theorem~\ref{T:codimension-3} we also make use of functions with a local area contracting factor in the following precise sense.
Let $f\colon (X,g)\rightarrow (Y,g_Y)$ be a differentiable map between smooth
Riemannian manifolds and let $\psi\colon X\rightarrow\RR_+$ be a smooth map.
We say that $f$ is \emph{area $\psi$-contracting} if $\|(f_\ast v)\wedge (f_\ast w)\|_{f(x)}\leq\psi(x)\|v\wedge w\|_x$, 
for all $x\in X$ and all $v,w\in T_xX$.


\subsection{Proof of Theorem~\ref{T:codimension-3}}
By Theorem~\ref{T:SYslicing}, used as in the proof of
Theorem~\ref{T:one-slicings}, there exist a closed smooth three-dimensional
submanifold $\Sigma_3\subset X$ and a map $F_3\colon (\Sigma_3,g_3)\rightarrow (S^3,ds_3^2)$ such that $\deg(F_3)=\deg(f)\neq 0$, $F_3$ is area $\epsilon$-contracting and the inequality
\begin{equation}\label{E:spectral information on Sigma_3}
	4\,\int_{\Sigma_3}|\nabla_3\varphi |^2\,d\mu_3\ \geq\ \int_{\Sigma_3}\big(\scal(g)-\scal(g_3)\big)\,
	\varphi^2\,d\mu_3
\end{equation}
holds for all functions $\varphi\in C^\infty(\Sigma_3)$. Strictly speaking, if
$n=8$ we might have to change $g$, but the curvature bound will be essentially
unchanged and we ignore this detail from now on.
Here, $g_3$ is the metric $g$ restricted to $\Sigma_3$, $\nabla_3$ is the
gradient operator on  $(\Sigma_3,g_3)$, and $\mu_3$ the measure induced by
$g_3$.
Let $L_3$ be the conformal Laplacian of $(\Sigma_3,g_3)$, i.e. $L_3=-8\Delta_3+\scal(g_3)$.
Since $\scal(g)\geq k_0$, from~\eqref{E:spectral information on Sigma_3} we deduce
\begin{equation}\label{E:spectral information on L_3}
	\int_{\Sigma_3}\varphi\, L_3(\varphi)\,d\mu_3\ \geq\ k_0\,\int_{\Sigma_3}\varphi^2\,d\mu_3\,,
	\qquad \forall\varphi\in C^\infty(\Sigma_3)\,.
\end{equation}
We now use the method of Schoen and Yau~\cite{SY79} to construct a metric of positive scalar curvature on $\Sigma_3$.
Let $\lambda_1$ be the first eigenvalue of the operator $L_3$.
By the usual variational characterization, Inequality~\eqref{E:spectral information on L_3} implies that $\lambda_1\geq k_0$.
Let $\phi\in C^\infty(\Sigma_3)$ be an eigenfunction relative to $\lambda_1$.
It is well-known that $\phi$ doesn't vanish at any point so we assume that $\phi>0$ and define the metric $\widetilde{g}_3=\phi^4g_3$.
By~\eqref{E:conformal change}, its scalar curvature satisfies
\begin{equation}\label{E:conformal metric on Sigma_3}
	\scal\left(\widetilde{g}_3\right)\,=\,\phi^{-5}L_3(\phi)\,=\,\phi^{-4}\lambda_1
	\,\geq\,\phi^{-4}\,k_0\,.
\end{equation}
Moreover, the map $F_3$ is area $(\epsilon\phi^{-4})$-contracting with respect to the metric $\widetilde{g}_3$.

Fix a number $R>0$ and let $S^1_R$ be the circle of radius $R$ endowed with the standard metric $dt_R^2$.
Let $\sigma\colon S^3\times S^1\rightarrow S^3\wedge S^1\cong S^4$ be a $1$-contracting ``smashing'' map of nonzero degree.
Define the nonzero degree map $f_4\colon \Sigma_3\times S^1_R\rightarrow S^4$ through the composition
\[
	\Sigma_3\times S^1_R\xrightarrow{\ F_3\times \frac{1}{R}\ }S^3\times S^1\xrightarrow{\ \ \sigma\ \ }
	S^4\,.
\]
Endow $\Sigma_3\times S^1_R$ with the product metric $\widetilde{g}_4:=\widetilde{g}_3+dt_R^2$.
Extend $\phi$ to a function on $\Sigma_3\times S^1_R$ by making it constant in
the $S^1_R$-direction.
With a slight abuse of notation, denote this function also by $\phi$.
Notice that with respect to $\widetilde{g}_4$ the map $f_4$ is area $(\epsilon\phi^{-4})$-contracting on vectors tangent to $\Sigma_3$ and $(1/R)$-contracting on vectors tangent to $S^1_R$.
Notice also that, since the metric $dt_R^2$ is flat, the scalar curvature of $\widetilde{g}_4$ satisfies
\begin{equation}\label{E:inequality S_4}
	\scal\left(\widetilde{g}_4\right)\ \geq\ \phi^{-4}\,k_0 >0.
\end{equation}

In order to obtain Inequality~\eqref{E:area-contracting inequality}, we use the method of Gromov and Lawson~\cite{GL83}.
Since $\Sigma_3$ has dimension $3$, it is a spin manifold and $\Sigma_3\times S^1_R$ is spin as well.
Choose a spin structure on $\Sigma_3\times S^1_R$ and let $\slashed{S}$ be the associated complex spinor bundle with Dirac operator $\slashed{D}$.
We now fix a vector bundle with connection on $S^4$ following  Llarull~\cite{Ll98}.
Set
\[
	E_0\,:=\,P_{Spin_4}(S^4)\times_\lambda \C l_4\,,
\]
with the metric and connection $\nabla^{E_0}$ induced from $(S^4,ds_4^2)$.
Here, $P_{Spin_4}(S^4)$ is the principal $\Spin_4$-bundle defining the spin structure on $TS^4$ and $\lambda$ is the representation given by left multiplication.
Using the map $f_4$, pull-back the bundle $E_0$ to $\Sigma_3\times S^1_R$ together with its connection.
Doing so, we obtain a bundle $E=f_4^\ast E_0$ with connection $\nabla=f_4^\ast\nabla^{E_0}$.
Let $\slashed{D}_E\colon \Gamma(\slashed{S}\tensor E)\rightarrow \Gamma(\slashed{S}\tensor E)$ be the operator $\slashed{D}$ twisted with the bundle $E$.

The volume element on $S^4$ gives a grading $E_0=E_0^+\oplus E_0^-$.
Set $\slashed{D}_{E^+}:=\slashed{D}_E|_{\Gamma(\slashed{S}\tensor E^+)}$.
It is an essentially self-adjoint elliptic operator of order one acting on smooth sections of the bundle $\slashed{S}\tensor E^+$.
By classical results on elliptic operators, its kernel is a finite dimensional vector space.
Since $\Sigma_3\times S^1_R$ is even-dimensional, we have the splitting $\slashed{S}=\slashed{S}^+\oplus\slashed{S}^-$.
This induces a $\ZZ_2$-grading $\Gamma(\slashed{S}\tensor E^+)=\Gamma(\slashed{S}^+\tensor E^+)\oplus\Gamma(\slashed{S}^-\tensor E^+)$ and the operator $\slashed{D}_{E^+}$ is odd with respect to this grading.
The index of $\slashed{D}_{E^+}$ is defined as the integer 
\[
	\ind(\slashed{D}_{E^+})\ :=\ \dim\Ker\slashed{D}_{E^+}^+\,-\,\dim\Ker\slashed{D}_{E^+}^-\in\ZZ\,,
\]
where $\slashed{D}_{E^+}^\pm:=\slashed{D}_E|_{\Gamma(\slashed{S}^\pm\tensor E^+)}$.
Since $c_2(E_0^+)\neq 0$ (see~\cite[page~66]{Ll98}), $\deg(f_4)\neq 0$, and
$p_1(\Sigma_3\times S^1_R)=0$ so that the total $\hat A$-genus of
$\Sigma_3\times S^1_R$ equals $1\in H^0(\Sigma^3\times S^1_R;\rationals)$, the
Atiyah-Singer index theorem used as 
in~\cite{GL83} implies that $\ind(\slashed{D}_{E^+}) \neq 0$.

In order to conclude the proof, we study the kernel of the operator $\slashed{D}_E^2$.
Pick a section $u\in \Gamma(\slashed{S}\tensor E)$.
The Bochner-Lichnerowicz-Weitzenb\"ock-Schr\"odinger formula (see~\cite[Theorem~8.17]{LM89}) implies
\begin{equation}\label{E:BLW1}
	\left<\slashed{D}^2_Eu,u\right>\ \geq\ \frac{1}{4}\left<\scal\left(\widetilde{g}_4\right)u,u\right>\,+\,
	\left<\calR^Eu,u\right>\,,
\end{equation}
where $\calR^E\in\End(\slashed{S}\tensor E)$ depends linearly on the components of the curvature of $\nabla^E$.
Here, $\left<\cdot,\cdot\right>$ denotes the inner product
\[
	\left<v,w\right>\ :=\ \int_{\Sigma_3\times S^1_R}\left<v,w\right>_x\,,
	\qquad\qquad v,w\in\Gamma(\slashed{S}\tensor E)\,,
\]
where $\left<\cdot,\cdot\right>_x$ is the inner product of the fiber $\slashed{S}_x\tensor E_x$.
We now estimate separately the two terms on the right hand side of ~\eqref{E:BLW1}.
From~\eqref{E:inequality S_4}, we have
\begin{equation}\label{E:BLW2}
	\left<\scal\left(\widetilde{g}_4\right)u,u\right>\ \geq\ k_0\,\left\|\phi^{-2}u\right\|^2\,.
\end{equation}
In order to estimate the second term we closely follow~\cite{Ll98}.
Fix a point $x\in \Sigma_3\times S^1_R$ and let $\{e_1,e_2,e_3,e_4\}$ be a $\widetilde{g}_4$-orthonormal basis of tangent vectors around $x$ such that $(\nabla e_j)_x=0$, $\{e_1,e_2,e_3\}$ are tangent to $\Sigma_3$ and $e_4$ is tangent to $S^1_R$.
Also choose an orthonormal basis $\{\epsilon_1,\epsilon_2,\epsilon_3,\epsilon_4\}$ of tangent vectors around $f(x)$ in such a way that $(\nabla \epsilon_j)_{f(x)}=0$ and $\epsilon_j=\lambda_j f_\ast e_j$ for suitable positive scalars $\{\lambda_j\}_{j=1}^4$.
The conditions $(\nabla e_j)_x=0$ and $(\nabla \epsilon_j)_{f(x)}=0$ are needed in order to use results from~\cite[Section~4]{Ll98}.
For $1\leq i,j\leq 3$ and $i\neq j$, we have
\[
	1\,=\,\|\epsilon_i\wedge\epsilon_j\|_x\,=\,\lambda_i\lambda_j\|f_\ast (e_i)\wedge f_\ast (e_j)\|_x\,\leq\,
	\lambda_i\lambda_j\epsilon\phi^{-4}\|e_i\wedge e_j\|_x\,=\,\lambda_i\lambda_j\epsilon\phi^{-4}\,,
\]
from which
\begin{equation}\label{E:contracting factor1}
	\frac{1}{\lambda_i\lambda_j}\,\leq\,\epsilon\phi^{-4}\,,\qquad\qquad 1\leq i,j\leq 3\,,\,\,i\neq j\,.
\end{equation}
Since $\Sigma_3$ is compact, $f_3$ is $c$-contracting for some constant $c$.
Therefore, for $i=1,2,3$ we have
\[
	1\,=\,\|\epsilon_i\wedge\epsilon_4\|_x\,=\,\lambda_i\lambda_4\|f_\ast (e_i)\wedge f_\ast(e_4)\|_x\,\leq\,
	\lambda_i\lambda_4\frac{c}{R}\,,
\]
from which
\begin{equation}\label{E:contracting factor2}
	\frac{1}{\lambda_i\lambda_4}\,\leq\,\frac{c}{R}\,,\qquad\qquad 1\leq i\leq 3\,.
\end{equation}
Using Inequalities~\eqref{E:contracting factor1} and~\eqref{E:contracting factor2},~\cite[Formula~(4.4)]{Ll98}, and~\cite[Lemma~4.5]{Ll98}, we get
\begin{align*}
	\left<\calR^Eu,u\right>_x &\geq\ -\frac{1}{4}\sum_{ i\neq j}\frac{1}{\lambda_i\lambda_j}\|u\|^2_x\\
	&= \ -\frac{1}{4}\sum_{1\leq i\neq j\leq 3}\frac{1}{\lambda_i\lambda_j}\|u\|^2_x
		\,-\frac{1}{2}\,\sum_{i=1}^3\frac{1}{\lambda_i\lambda_4}\|u\|^2_x\\
	&\geq\ -\frac{1}{4}\left\{6\,\epsilon\phi^{-4}\left\|u\right\|^2_x
		\,+\,6\,\frac{c}{R}\|u\|^2_x\right\}\,.
\end{align*}
Integrating, we obtain
\begin{equation*}\label{E:BLW3}
	\left<\calR^Eu,u\right>\ \geq\ -\frac{1}{4}\left\{6\,\epsilon\left\|\phi^{-2}u\right\|^2
		\,+\,6\,\frac{c}{R}\|u\|^2\right\}\,.
\end{equation*}
This inequality together with~\eqref{E:BLW1} and~\eqref{E:BLW2} implies
\begin{equation}\label{E:BLW4}
	\left<\slashed{D}^2_Eu,u\right>\ \geq\ \frac{1}{4} \left\{(k_0\,-\,6\epsilon)\,\left\|\phi^{-2}u\right\|^2\,
	-6\,\frac{c}{R}\|u\|^2\right\}\,.
\end{equation}
Finally, notice that $\Ker \slashed{D}_{E^+}\subset \Ker \slashed{D}_E\subset \Ker \slashed{D}_E^2$.
Therefore, since $\phi>0$, $\ind(\slashed{D}_{E^+})\neq 0$ and $R$ can be chosen arbitrarily large, Inequality~\eqref{E:BLW4} implies Inequality~\eqref{E:area-contracting inequality}.
\hfill$\square$


\section{\bf A ``cut-and-paste" construction}\label{S:cut-and-paste}
This section is devoted to proving Theorem~\ref{T:enlargeable nonspin}.
The existence and regularity result \ref{T:SYslicing} of Schoen and Yau
requires a map from a closed manifold into a torus.
The notion of enlargeability for a metric is defined by means of maps from (possibly noncompact) manifolds into spheres.
The next theorem allows us to use Theorem \ref{T:SYslicing} in the latter
setting. It is proved by using a ``cut-and-paste'' construction.


\begin{theorem}\label{T:Lego}
Let $(X,g)$ be an oriented $n$-dimensional Riemannian manifold without boundary.
Suppose $f\colon (X,g)\rightarrow (S^n,ds_n^2)$ is an $\epsilon$-contracting map which is constant at infinity and of nonzero degree.
Then there exist a closed oriented $n$-dimensional Riemannian manifold
$(\Sigma_n,g_n)$, a map $F\colon (\Sigma_n,g_n)\rightarrow \big(T
^n, dt_n^2\big)$, a connected open set $J\subset S^1$ and a constant $c_n$ such that
\begin{enumerate}
	\item $\deg(F)=\deg(f)$;
	\item the map $F$ is $(c_n\epsilon)$-contracting on $F^{-1}(J\times  T^{n-1})$;
	\item the set $F^{-1}(J\times  T^{n-1})$ is isometric to an open subset of $X$.
\end{enumerate}
Moreover, the set $J$ and the constant $c_n$ are independent of the Riemannian manifold $(X,g)$ and the map $F$.
\end{theorem}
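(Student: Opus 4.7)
My plan is a cut-and-paste construction which uses a fixed universal degree-$1$ pinch map $\pi\colon T^n\to S^n$ as the bridge between $S^n$ and $T^n$. The idea is to lift the compactly supported map $f$ through $\pi$ to obtain a map $F$ to $T^n$ of the same degree, while arranging the topology so that the slab $F^{-1}(J\times T^{n-1})$ sits inside an isometric copy of a piece of $X$.

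First, since $f\equiv N\in S^n$ outside a compact set, I would choose a compact $K\subset X$ with smooth boundary such that $f\equiv N$ on a collar neighborhood of $\partial K$ and on $X\setminus K$. This can be arranged by a small modification of $f$ that affects the $\epsilon$-contracting constant by at most a universal factor depending only on $n$. Next, I fix, once and for all, a smooth pinch map $\pi\colon T^n\to S^n$ of degree $1$ that restricts to a diffeomorphism of an open disk $D\subset T^n$ onto $S^n\setminus\{N\}$ and smoothly collapses $T^n\setminus \bar D$ to $\{N\}$. I then choose an open connected $J\subsetneq S^1$ such that $D\subset J\times T^{n-1}$ and a basepoint $p_0\in T^n\setminus\overline{J\times T^{n-1}}$ inside the collapsed region. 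The Lipschitz constant $c_n$ of $\pi|_D^{-1}$ on the complement of any small neighborhood of $N$ depends only on $n$.

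The closed manifold $\Sigma_n$ is formed as the double $K\cup_{\partial K}K'$, equipped with the Riemannian metric inherited from $X$ on the first copy and any smooth metric on $K'$. The map $F\colon\Sigma_n\to T^n$ is defined piecewise: on the first copy of $K$, $F=\pi|_D^{-1}\circ f$ wherever $f\neq N$, and $F\equiv p_0$ on the collar of $\partial K$ (where $f\equiv N$) and throughout $K'$; a smooth interpolation is used in a transition region contained in the preimage of the collapsed region of $\pi$. Property (a) is then verified by integrating a top form $\Omega$ on $T^n$ supported inside $D$: the transition region and the second copy contribute zero (since there $F$ maps into $T^n\setminus D$ where $\Omega=0$), and the first copy contributes $\int_K f^*(\pi|_D^{-1})^*\Omega=\deg(f)\int_D\Omega=\deg(f)\int_{T^n}\Omega$, giving $\deg(F)=\deg(f)$. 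Property (b) follows because on $F^{-1}(J\times T^{n-1})\subset$ first copy, $F=\pi|_D^{-1}\circ f$ is $(c_n\epsilon)$-contracting. Property (c) is immediate: since $p_0\notin J\times T^{n-1}$ and the second copy plus transition region all map outside $J\times T^{n-1}$, the preimage $F^{-1}(J\times T^{n-1})$ lies in the first copy of $K$ and is therefore isometric to an open subset of $X$.

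The main technical difficulty is arranging the smooth transition: as $f(x)\to N$, the naive lift $\pi^{-1}(f(x))$ approaches the sphere $\partial D\subset T^n$, not a single point, so a direct extension to the constant $p_0$ would be discontinuous. This is handled by routing the interpolation through the collapsed region $T^n\setminus\bar D$ of $\pi$, which is connected and contains $p_0$. Since in the collar $f\equiv N$, the required lift condition $\pi\circ F=f=N$ is automatically satisfied for any $F$-value in $T^n\setminus D$, providing enough flexibility to smoothly interpolate to $p_0$ while keeping $F$ entirely outside $J\times T^{n-1}$; moreover the contribution of this transition region to the degree vanishes because $(\pi|_D^{-1})^*\Omega$ pulled back equals zero on this region for $\Omega$ supported in $D$.
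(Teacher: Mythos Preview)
Your approach has a genuine gap at the interpolation step, and in fact it cannot be repaired as stated. Consider the simplest instance $X=S^n$ (closed, so ``constant at infinity'' is vacuous) and $f=\id$. Then $K=S^n$, the set $\{f=N\}$ is the single point $\{N\}$, and your proposed lift $F=\pi|_D^{-1}\circ f=\pi|_D^{-1}$ on $S^n\setminus\{N\}$ has no continuous extension to $N$: as $x\to N$ the value $\pi|_D^{-1}(x)$ approaches \emph{every} point of $\partial D$, depending on the direction of approach. Your remedy is to interpolate on a small ball $\bar U$ around $N$ ``through the collapsed region $T^n\setminus\bar D$'', but this requires extending the boundary map $\partial U\cong S^{n-1}\xrightarrow{\pi|_D^{-1}}\partial D_1\hookrightarrow T^n\setminus\bar D$ over the disk $\bar U$. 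That boundary map is precisely (a degree-one map onto) the attaching sphere of the top cell of $T^n$, hence represents a \emph{nonzero} class in $\pi_{n-1}(T^n\setminus\bar D)\cong\pi_{n-1}\bigl((T^n)^{(n-1)}\bigr)$; it is not nullhomotopic, so no such extension exists. If instead you drop the requirement that the interpolation stay outside $D$ and simply extend to some smooth $F\colon S^n\to T^n$, then necessarily $\deg(F)=0$ for $n\ge 2$ (any such map lifts to $\RR^n$), contradicting~(a). The same obstruction appears in general at every isolated regular preimage of $N$ in the interior of $K$; your claim that one can ``arrange $f\equiv N$ only on a collar of $\partial K$ by a small modification'' is false when $|\deg(f)|\ge 1$, since $f$ must be surjective.

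The paper sidesteps this entirely by cutting in the \emph{target} rather than trying to lift through a pinch map. One embeds $T^{n-1}\hookrightarrow S^n$ with trivial normal bundle and takes a closed tubular neighbourhood $V'\cong D^1\times T^{n-1}$ avoiding the base point $p$. After making $f$ transverse to $\partial V'$, set $\Sigma_n':=f^{-1}(V')$, a compact manifold with boundary on which $f$ already maps $(\Sigma_n',\partial\Sigma_n')\to(V',\partial V')$. Now double both source and target: the double of $V'$ is $S^1\times T^{n-1}=T^n$, and the doubled map $Q\colon\Sigma_n\to T^n$ is automatically smooth with $\deg(Q)=\deg(f)$, since a regular value in the interior of $V'$ has the same signed preimage count for $Q$ as for $f$. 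No interpolation or extension problem arises, and the slab $J\times T^{n-1}$ is simply chosen inside the original $V'\subset S^n$ where everything agrees with $f$ and the metric $g$.
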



\begin{proof}
By hypothesis, there exists a point $p\in S^n$ such that $f^{-1}(S^n\setminus\{p\})$ is relatively compact.
Embed the torus $ T^{n-1}$ into $S^n$ with trivial normal bundle and in such a way that $p\in S^n\setminus  T^{n-1}$.
Let $V^\prime$ be a closed tubular neighborhood of $ T^{n-1}$ in $S^n$ such that $p\in S^n\setminus V^\prime$.
By a close $C^1$-approximation, make $f$ transversal to $\partial V^\prime=S^0\times  T^{n-1}$ and define the compact submanifold $\Sigma_n^\prime :=f^{-1}(D^1\times  T^{n-1})$ of $X$ with boundary $\partial \Sigma_n^\prime=f^{-1}(\partial V^\prime)$.

Let $V$ be the oriented closed manifold obtained as the double of $V^\prime$.
Since $ T^{n-1}$ has trivial normal bundle in $S^n$, there is a diffeomorphism
$\Phi\colon V\rightarrow S^1\times  T^{n-1}$.
Clearly, $\deg(\Phi)=1$.
We endow $ T^n=S^1\times  T^{n-1}$ with the standard product metric $ dt_n^2$.
Pick a connected open interval $J\subset D^1$ and let $g_V$ be a metric on $V$ coinciding with the round metric of $S^n$ on $\Phi^{-1}(J\times  T^{n-1})$.
Since the manifold $V$ is compact, the map $\Phi\colon (V,g_V)\rightarrow ( T^n, dt_n^2)$ is $c$-contracting for some constant $c$.

We now define the closed oriented manifold $\Sigma_n$ as the double of $\Sigma_n^\prime$.
The map $f$ induces a map $Q\colon \Sigma_n\longrightarrow V$.
Since a regular value $x$ of $Q$ can be regarded as a regular value of $f$ and
$Q^{-1}(x)=f^{-1}(x)$, we have $\deg(Q)=\deg(f)$.

Define on $\Sigma_n$ a metric $g_n$ coinciding with $g$ on the open set $f^{-1}(J\times  T^{n-1})$.
Let $F\colon (\Sigma_n,g_n)\rightarrow \big( T^n,\, dt_n^2\big)$ be the map defined through the composition
\[
	(\Sigma_n,g_n)\xrightarrow{\ \ Q\ \ } (V,g_V)\xrightarrow{\ \ \Phi\ \ } 
	\Big( T^n,\, dt_n^2\Big)\,.
\]
Clearly, $\deg(F)=\deg(f)$.
Notice that we can view $J\times  T^{n-1}$ as a subset of both, $V^\prime\subset S^n$ and $ T^n$, and that $F^{-1}(J\times  T^{n-1})$ coincides with $f^{-1}(J\times  T^{n-1})$.
Therefore, $F^{-1}(J\times  T^{n-1})$ is isometric to an open subset of $X$ and $F$ is $(c_n\epsilon)$-contracting when restricted to $F^{-1}(J\times  T^{n-1})$.
Finally, notice that the set $J$ and the constant $c_n$ depend only on the ``cut-and-paste" manipulations of the target space $S^n$.
\end{proof}

From Theorem~\ref{T:Lego} and Theorem~\ref{T:one-slicings} we directly deduce the following consequence.


\begin{corollary}\label{C:length contracting inequality}
Let $(X,g)$ be a connected oriented $n$-dimensional Riemannian manifold without boundary.
Suppose $f\colon (X,g)\rightarrow (S^n,ds_n^2)$ is a map which is constant at infinity, of nonzero  degree and $\epsilon$-contracting. 
If $\scal(g)\geq k_0>0$, then there exists a constant $b_n$, depending only on the dimension of the manifold $X$, such that
\[
	\epsilon\ \geq\ b_n\sqrt{k_0}\,.
\]
\end{corollary}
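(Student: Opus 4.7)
The plan is to combine Theorem~\ref{T:Lego} with Theorem~\ref{T:one-slicings} in a direct way: Theorem~\ref{T:Lego} bridges the gap between the sphere-valued maps that appear in the definition of enlargeability and the torus-valued maps that Schoen-Yau $k$-slicings require, so this corollary is essentially a matter of tracking constants.

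First I would invoke Theorem~\ref{T:Lego} on the given map $f\colon (X,g)\to (S^n,ds_n^2)$, producing a closed oriented Riemannian manifold $(\Sigma_n,g_n)$, a map $F\colon (\Sigma_n,g_n)\to (T^n, dt_n^2)$ with $\deg(F)=\deg(f)\neq 0$, a connected open set $J\subset S^1$, and a dimensional constant $c_n$ such that $F$ is $(c_n\epsilon)$-contracting on $F^{-1}(J\times T^{n-1})$ and this preimage is isometric to an open subset $U\subset X$. Because the isometry identifies $F^{-1}(J\times T^{n-1})$ with $U\subset X$ and the scalar curvature is an intrinsic invariant, the hypothesis $\scal(g)\geq k_0$ on $X$ transfers to $\scal(g_n)\geq k_0$ on $F^{-1}(J\times T^{n-1})$.

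Now all the hypotheses of Theorem~\ref{T:one-slicings} are met: $F$ is a nonzero-degree map from a closed oriented Riemannian manifold to $T^n$, the subset $J\subset S^1$ is connected and open, and both the scalar curvature lower bound and the contracting bound hold on $F^{-1}(J\times T^{n-1})$ with constants $k_0$ and $c_n\epsilon$ respectively. Applying that theorem yields
\[
c_n\epsilon \;\geq\; \sqrt{k_0}\,\frac{\Length(J)}{2\pi}.
\]
Setting $b_n := \Length(J)/(2\pi c_n)$, which depends only on $n$ since both $J$ and $c_n$ are produced by Theorem~\ref{T:Lego} independently of $(X,g)$ and $f$, gives the desired inequality $\epsilon\geq b_n\sqrt{k_0}$.

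No step here looks genuinely hard once Theorem~\ref{T:Lego} and Theorem~\ref{T:one-slicings} are in hand; the only subtle point to check is the local nature of the hypotheses of Theorem~\ref{T:one-slicings}, namely that both the contracting bound and the scalar curvature bound are required only on $F^{-1}(J\times T^{n-1})$. This is precisely how Theorem~\ref{T:one-slicings} is stated, so the deduction goes through cleanly.
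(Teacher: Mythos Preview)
Your proof is correct and follows exactly the approach of the paper, which simply states that the corollary is deduced directly from Theorem~\ref{T:Lego} and Theorem~\ref{T:one-slicings}. You have spelled out the details carefully, including the key observation that $b_n=\Length(J)/(2\pi c_n)$ depends only on $n$ because $J$ and $c_n$ are produced by Theorem~\ref{T:Lego} independently of $(X,g)$ and $f$.
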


We now use the inequality of the previous corollary to prove Theorem~\ref{T:enlargeable nonspin}.


\subsection{Proof of Theorem~\ref{T:enlargeable nonspin}}
Let $M$ be a connected oriented manifold without boundary of dimension $n$ and let $g$ be a Riemannian metric on $M$ such that $\scal(g)\geq k_0$ for some constant $ k_0>0$.
Suppose $\bar M\rightarrow M$ is a connected oriented cover and $f\colon (\bar M,\bar g)\rightarrow (S^n, ds_n^2)$ is a map which is constant at infinity, of nonzero degree, and $\epsilon$-contracting.
Here, $\bar g$ denotes the metric $g$ lifted to $\bar M$.
By Corollary~\ref{C:length contracting inequality}, there exists a constant $b_n$, depending only on $n$, such that $\epsilon\ \geq\ b_n\sqrt{k_0}$.
Therefore, $g$ cannot be enlargeable.
\hfill$\square$


\section{\bf Nonenlargeable metrics on noncompact manifolds}\label{S:nonenlargeable metrics}
This section is devoted to the proof of Theorem~\ref{T:nonenlargeable metrics}.
To this end, we have to construct appropriate metrics which violate the
enlargeability condition.
The following lemma describes such metrics.


\begin{lemma}\label{lem:non-extendable_gives_nonenlargeable}
  Assume that $M$ is a manifold with a Riemannian metric $g$ such that there
  is $C>0$ and for
  each point 
  $x\in M$ there is a $1$-Lipschitz curve $\gamma\colon [0,c)\to M$ with $c\le
  C$ and with $\gamma(0)=x$
  which has no continuous extension to $[0,c]$.
  Then $M$ is not enlargeable.
\end{lemma}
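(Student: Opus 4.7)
The plan is to show that the enlargeability condition fails for every sufficiently small $\epsilon$, specifically for $\epsilon < \pi/C$. Suppose for contradiction we are given a connected cover $q \colon \bar M \to M$, a compact $K \subset \bar M$, and a compactly supported, $\epsilon$-contracting map $f \colon (\bar M, \bar g) \to (S^n, ds_n^2)$ of nonzero degree, with $f$ constantly equal to some point $p \in S^n$ outside $K$. A compactly supported map of nonzero degree is surjective (its extension to the one-point compactification $\bar M^+ \to S^n$ has the same nonzero degree), so there exists $\bar x \in \bar M$ with $f(\bar x) = -p$, the antipode of $p$. Setting $x = q(\bar x)$, the hypothesis supplies a $1$-Lipschitz curve $\gamma \colon [0, c) \to M$ with $\gamma(0) = x$, $c \le C$, and no continuous extension to $[0, c]$. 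I would lift $\gamma$ to $\bar\gamma \colon [0, c) \to \bar M$ starting at $\bar x$ via path lifting; since $q$ is a local isometry with respect to the lifted metric $\bar g$, the speed of $\bar\gamma$ equals that of $\gamma$ almost everywhere, so $\bar\gamma$ remains $1$-Lipschitz, and if $\bar\gamma$ extended continuously to $c$ then so would its projection $\gamma$, giving a contradiction.

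The next step is the key dynamical observation that $\bar\gamma(t)$ must leave every compact subset of $\bar M$ as $t \to c^-$. Indeed, as a $1$-Lipschitz curve on the bounded interval $[0, c)$, the curve $\bar\gamma$ is Cauchy near $c$: for any sequence $t_n \to c^-$ one has $d_{\bar g}(\bar\gamma(t_n), \bar\gamma(t_m)) \le |t_n - t_m| \to 0$. If $\bar\gamma$ were trapped in a compact subset, this Cauchy sequence would converge inside $\bar M$ and yield a continuous extension of $\bar\gamma$ to $c$, contradicting non-extendability. Applied to the compact set $K$, this produces some $t_0 \in [0, c)$ with $\bar\gamma(t_0) \notin K$, so that $f(\bar\gamma(t_0)) = p$.

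The argument concludes with a length comparison in $S^n$. The composition $f \circ \bar\gamma \colon [0, t_0] \to S^n$ is $\epsilon$-Lipschitz (composition of a $1$-Lipschitz curve with an $\epsilon$-contracting map), so its length is at most $\epsilon t_0 \le \epsilon C$, and it connects $f(\bar x) = -p$ to $f(\bar\gamma(t_0)) = p$. Since $d_{S^n}(-p, p) = \pi$, this forces $\epsilon \ge \pi/C$, a bound independent of the chosen cover and map, contradicting $\epsilon < \pi/C$ and proving that no metric $g$ of the stated type can be enlargeable. The only step that demands genuine care is the transfer of the $1$-Lipschitz property and non-extendability to the lift $\bar\gamma$, which I handle by invoking the local isometry nature of the Riemannian covering and the fact that length is bounded by the Lipschitz constant times the parameter length; conceptually, the argument is simply that a slowly contracting map cannot stretch a bounded-length escaping curve across a long arc of $S^n$.
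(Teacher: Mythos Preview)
Your proof is correct and follows essentially the same route as the paper's: lift the non-extendable $1$-Lipschitz curve to the cover, use the Cauchy-sequence argument to see it must leave the support $K$, and then compare lengths in $S^n$. The only cosmetic difference is that the paper argues that \emph{every} point of $\bar M$ is mapped within a short distance of the base point (hence $f$ misses the antipode and has degree zero), whereas you pick a preimage of the antipode first and derive the length contradiction from that single curve; these are two sides of the same argument, and your version even yields the slightly sharper threshold $\epsilon<\pi/C$ instead of $\epsilon<1/C$.
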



\begin{proof}
  Note that every covering $\bar M\to M$ has the same property, since the required
  non-extendable curves can be obtained as lifts of the non-extendable curves
  in $M$.

  Observe also that, if $K\in\bar M$ is compact and $\gamma\colon [0,c)\to
  \bar M$ is non-extendable $1$-Lipschits, then the image of $\gamma$ has to
  leave $K$. Otherwise, there would be a limit point of the sequence
  $\left(\gamma(c-1/k)\right)_{k\in\NN}$ in
  $K$. On the other hand, due to the $1$-Lipschitz property,
  $\left(\gamma(c-1/k)\right)_{k\in\NN}$ is a Cauchy sequence so that we could
  extend $\gamma$ continuously to $c$.

  Assume now that $\epsilon<\frac{1}{C}$ and $f\colon \bar M\to S^n$ is a
  compactly 
  supported $\epsilon$-contracting map (say supported on $K
 \subset \bar M$). For an arbitrary $x\in \bar M$ pick a $1$-Lipschitz curve
 $\gamma\colon [0,c)\to \bar M$ with $\gamma(0)=x$ which cannot be extended
 over $c<C$. As $\gamma$ leaves $K$, it follows that some points on the curve
 $\gamma$ are mapped to the base point in $S^n$. Because of the $1$-Lipschitz
 property and because $f$ is $\epsilon$-contracting, every point on the curve
 $\gamma$ is mapped to a point of distance $\le \epsilon c<\epsilon
 C<1$ from the base point. This applies in particular to the point $x\in\bar
 M$, which was arbitrary. Now the diameter of $S^n$ is larger than $1$,
 therefore $f$ is not surjective and consequently $\deg(f)=0$.
\end{proof}


To apply this to prove Theorem~\ref{T:nonenlargeable metrics}, we need
conditions which imply the existence of the non-extendable paths. 
The following lemma provides a basic such condition.


\begin{lemma}\label{lem:non-complete_gives_non-extendable}
  Assume that $M$ is a connected Riemannian manifold of diameter less than
  $C>0$ and 
  that $M$ is not complete as a metric space. Then for each $x\in M$ there is
  a $1$-Lipschitz path $\gamma\colon [0,c)\to M$ with $c<C+2$ and with
  $\gamma(0)=x$ which cannot be extended continuously to $c$.
\end{lemma}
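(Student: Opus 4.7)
The plan is to exploit the failure of completeness to produce a Cauchy sequence ``running off the edge'' of $M$ and then to trace it by a concatenation of short $1$-Lipschitz pieces. Since $M$ is not complete, fix a Cauchy sequence $(y_n)_{n\ge 1}$ in $M$ with no limit in $M$; after passing to a subsequence I arrange $d(y_n,y_{n+1})<2^{-n-1}$ for all $n\ge 1$ (a subsequence of a non-convergent Cauchy sequence is itself Cauchy and still fails to converge, since any subsequential limit of a Cauchy sequence is a genuine limit of the whole sequence). The additional key ingredient is that on a connected Riemannian manifold the distance $d$ is by definition the infimum of lengths of piecewise smooth curves; hence for each pair $a,b\in M$ and each $\delta>0$ there exists a unit-speed (so $1$-Lipschitz) curve from $a$ to $b$ of length strictly less than $d(a,b)+\delta$.

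Given the basepoint $x$, I first choose a $1$-Lipschitz curve $\gamma_0\colon[0,\ell_0]\to M$ from $x$ to $y_1$ with $\ell_0<d(x,y_1)+1/2<C+1/2$, using the diameter bound. For each $n\ge 1$, I choose a $1$-Lipschitz curve $\gamma_n\colon[0,\ell_n]\to M$ from $y_n$ to $y_{n+1}$ with $\ell_n<d(y_n,y_{n+1})+2^{-n-1}<2^{-n}$. Concatenating these in order and reparametrizing by total arclength produces a single $1$-Lipschitz curve $\gamma\colon[0,c)\to M$ with $\gamma(0)=x$ and
\[
c \;=\; \ell_0+\sum_{n=1}^{\infty}\ell_n \;<\; C+\tfrac12+\sum_{n=1}^{\infty}2^{-n}\;=\;C+\tfrac32\;<\;C+2.
\]

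It remains to check that $\gamma$ admits no continuous extension to $[0,c]$. Setting $t_n:=\ell_0+\ell_1+\cdots+\ell_{n-1}$, the construction gives $\gamma(t_n)=y_n$ and $t_n\nearrow c$, so any continuous extension $\bar\gamma$ would force $y_n=\gamma(t_n)\to\bar\gamma(c)\in M$, contradicting the choice of $(y_n)$. The only issue requiring attention is to secure the bound $c<C+2$ simultaneously with non-extendability: the bound is obtained by using the diameter to cap the opening path from $x$ to $y_1$ by roughly $C$, while the tail coming from the Cauchy sequence is controlled by a summable geometric series; non-extendability then follows automatically from non-convergence of $(y_n)$.
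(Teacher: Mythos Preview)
Your proof is correct and follows essentially the same approach as the paper: take a non-convergent Cauchy sequence, thin it so successive distances are summable, join $x$ to the sequence by an initial arc of length roughly $C$, and concatenate unit-speed ``almost geodesics'' between consecutive terms to obtain a $1$-Lipschitz path of total length $<C+2$ whose extension would force the Cauchy sequence to converge. Your slightly sharper accounting even yields $c<C+3/2$, but otherwise the argument matches the paper's.
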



\begin{proof}
  Choose a Cauchy sequence $(x_k)_{k\in\NN}$ in $M$ which does not converge,
  which exists due to the fact that $M$ is not complete. By passing to a
  subsequence, we can assume that $d(x_k,x_{k+1})\le 2^{-k-1}$ for each $k$.

  Choose a $1$-Lipschitz path of length $<C+1$ from $x$ to $x_1$ and
  $1$-Lipschitz paths from $x_k$ to $x_{k+1}$ of length $<2^{-k}$. These paths
  exist due to the fact that $M$ is a path-metric space as almost geodesics
  parametrized by arc length. Their concatenation is a path $\gamma\colon
  [0,c)\to M$ which satisfies all the conditions. It can not be extended
  continuously to the closed interval, because the value at the endpoint would
  have to be a limit point of the non-convergent Cauchy sequence
  $(x_k)_{k\in\NN}$.
\end{proof}


Now we can put together the observations made so far to prove Theorem~\ref{T:nonenlargeable metrics}. 

\subsection{Proof of Theorem~\ref{T:nonenlargeable metrics}}
Using \cite[Theorem 2]{NO61}, we choose a
finite diameter Riemannian metric $g$ on 
$M$. The Heine-Borel property says that a complete finite diameter Riemannian
manifold is compact. As $M$ by assumption is not compact, it is not complete
for $g$. By
Lemma \ref{lem:non-complete_gives_non-extendable} there are the required
non-extendable paths in $M$ to apply Lemma
\ref{lem:non-extendable_gives_nonenlargeable} and the thesis follows.
\hfill$\square$


\end{document}